\theoremstyle{definition}
\newtheorem{thm}{Theorem}
\newtheorem{lem}{Lemma}
\newtheorem{prop}{Proposition}
\newtheorem{cor}{Corollary}
\newtheorem{de}{Definition}
\newtheorem{rem}{Remark}
\numberwithin{equation}{section}
\begin{document}
\setlength{\abovedisplayskip}{1ex} 
\setlength{\belowdisplayskip}{1ex}

\title{The equivalence relationship between Li-Yorke $\delta$-chaos and
distributional $\delta$-chaos in a sequence
\footnote{Received date: 2009-03-09. Supported by NNSFC(10771079) and Guangzhou Education Bureau (08C016).
\newline \hbox{}\quad\ \ 
This is the English version of a published paper,
cite should be as  ``Jian Li and Feng Tan, \textit{The equivalence relationship between Li-Yorke $\delta$-chaos and
distributional $\delta$-chaos in a sequence}, Journal of South China Normal University (Natural Science Edition), 2010, No.3: 34--38 (In Chinese)''.}}
\author{Jian  Li,\quad Feng Tan \\ E-mail: lijian09@mail.ustc.edu.cn,\quad tanfeng@scnu.edu.cn\\
(School of Mathematics, South China Normal University, \\Guangzhou, China 510631)}
\date{}
\maketitle

\begin{center}
\begin{minipage}{13cm}
\footnotesize{ \textbf{Abstract: } In this paper, we discuss the
relationship between Li-Yorke chaos and distributional chaos in a
sequence. We point out the set of all distributional $\delta$-scramble
pairs in the sequence $Q$ is a $G_\delta$ set, and prove that Li-Yorke
$\delta$-chaos is equivalent to distributional $\delta$-chaos in a
sequence, a uniformly chaotic set is a distributional scramble set in some
sequence and
a class of transitive system implies distributional chaos in a sequence. \\
\textbf{Keywords: } Li-Yorke chaos, distributional chaos in a sequence, transitive system\\
\textbf{MSC(2000): } 54D20, 37B05}
\end{minipage}
\end{center}

\section{Introduction}
Throughout this paper a {\em topological dynamical system} (TDS for short) is a pair $(X,f)$,
where $X$ is a non-vacuous compact metric space with metric $d$ and $f$ is a continuous map for $X$ to itself.
If $Y\subset X$ is a closed invariant set (i.e.\@ $f(Y)\subset Y$),
then we call $(Y,f|_Y)$ is a {\em subsystem} of $(X,f)$.

Let $A$ be a subset of $X$, denote the {\em closure} of $A$ by $\overline{A}$.
For a given positive number $\delta$,
put $[A]_\delta=\{x\in X\mid \inf_{y\in A}d(x,y)<\delta\}$.
Denote the {\em diagonal} of the product space $X\times X$
by $\Delta=\{(x,x)\in X\times X\mid x\in X\}$.

In \cite{LiYorke1975}, Li and Yorke first introduced the word ``chaos" to
describe the complexity of the orbits.
\begin{de}
Let $(X,f)$ be a TDS. A pair $(x,y)\in X\times X$ is called a {\em Li-Yorke scrambled pair}, if
$$\liminf_{n\rightarrow\infty}d(f^n(x),f^n(y))=0, \quad\limsup_{n\rightarrow\infty}d(f^n(x),f^n(y))>0.$$
A subset $C$ of $X$ is called a {\em Li-Yorke chaotic set}, if every $(x,y)\in
C\times C\setminus\Delta$ is a Li-Yorke scrambled pair.
The system $(X,f)$ is called {\em Li-Yorke chaotic}, if there exists some uncountable Li-Yorke chaotic set.

For a give positive number $\delta$,
a pair $(x,y)\in X\times X$ is called a {\em Li-Yorke $\delta$-scrambled pair}, if
$$\liminf_{n\rightarrow\infty}d(f^n(x),f^n(y))=0,\quad\limsup_{n\rightarrow\infty}d(f^n(x),f^n(y))>\delta.$$
A subset $C$ of $X$ is called a {\em Li-Yorke $\delta$-chaotic set}, if every $(x,y)\in
C\times C\setminus\Delta$ is a Li-Yorke $\delta$-scrambled pair.
The system $(X,f)$ is called {\em Li-Yorke $\delta$-chaotic},
if there exists some uncountable Li-Yorke $\delta$-chaotic set.
\end{de}

In \cite{Simtal1994}, Schweizer and Smital introduced a new kind of chaos,
which is usually called distribution chaos.
Later, the authors in \cite{Du} introduced the conception of distribution chaos in a sequence.
See \cite{Yang} and \cite{Gu} for recent results.

Let $(X,f)$ be a TDS and $Q=\{m_i\}_{i=1}^{\infty}$ be a strictly increasing sequence of positive integers.
For $x,y\in X$, $t>0$ and $n\geq 1$, put
$$\Phi^n_{(xy,Q)}(t)=\frac{1}{n}\#\{1\le i\le n\mid d(f^{m_i}(x),f^{m_i}(y))\le t\}, $$
where $\#\{\cdot\}$ denote the cardinal number of a set. Let
$$\Phi_{(xy,Q)}(t)=\liminf_{n\to\infty}\Phi^n_{(xy,Q)}(t),\quad
\Phi^\star_{(xy,Q)}(t)=\limsup_{n\to\infty}\Phi^n_{(xy,Q)}(t).$$
then $\Phi_{(xy,Q)}$ and $\Phi^\star_{(xy,Q)}$ are called the {\em lower and
upper distribution function of $(x,y)$ with respect to the sequence $Q$}.
Clearly, for every $t>0$, $\Phi_{(xy,Q)}(t)\le\Phi^\star_{(xy,Q)}(t)$.

\begin{de}\label{1}
Let $(X,f)$ be a TDS and $Q$ be a strictly increasing sequence of positive integers.
A pair $(x,y)\in X\times X$ is called a
{\em distributional scrambled pair in the sequence $Q$}, if \\
\indent(1) for every $t>0$, $\Phi^{\star}_{(xy,Q)}(t)=1$; \\
\indent(2) there exists some $s>0$ such that $\Phi_{(xy,Q)}(s)=0$. \\
\noindent A subset $D$ of $X$ is called a
{\em distributional chaotic set in the sequence $Q$},
if every $(x,y)\in D\times D\setminus\Delta$ is a distributional scrambled pair
in the sequence $Q$.
The system $(X,f)$ is called {\em distributional chaotic in a sequence},
if there exists some strictly increasing sequence of positive integers $P$
such that there is an uncountable distributional chaotic set in the sequence $P$.

For a give positive number $\delta$, a pair $(x,y)\in X\times X$ is called a {\em distributional
$\delta$-scrambled pair in the sequence $Q$}, if \\
\indent(1) for every $t>0$, $\Phi^{\star}_{(xy,Q)}(t)=1$; \\
\indent(2) $\Phi_{(xy,Q)}(\delta)=0$. \\
\noindent A subset $D$ of $X$ is called a
{\em distributional $\delta$-chaotic set in the sequence $Q$},
if every $(x,y)\in D\times
D\setminus\Delta$ is a distributional $\delta$-scrambled pair in the sequence $Q$.
The system $(X,f)$ is called {\em distributional $\delta$-chaotic in a sequence},
if there exists some strictly increasing sequence of positive integers $P$
such that there is an uncountable distributional $\delta$-chaotic set in the sequence $P$.
\end{de}

Recently, the Li-Yorke and distributional chaos have aroused great interest.
For a continuous map $f$ from the unit closed interval $[0,1]$ to itself,
if $f$ has a period point with periodic $3$,
then the system $([0,1],f)$ is Li-Yorke chaotic \cite{LiYorke1975}.
The system $([0,1],f)$ is Li-Yorke chaotic
if and only if it is distributional chaotic in a sequence \cite{Du}.
If a system $(X,f)$ has positive entropy then it is Li-Yorke chaotic \cite{Blanchard}.
A weakly mixing system is distributional chaotic in a sequence \cite{Yang}.

In this paper, we discuss the relationship between Li-Yorke chaos and distributional chaos in a sequence and
chaos properties in transitive systems. The organization of the paper is as follows.
In section 2, we give a equivalent definition of distributional scrambled pair and
point out the set of all distributional $\delta$-scramble pairs in the sequence $Q$ is a $G_\delta$ set.
In section 3, we first show that a countable Li-Yorke chaotic set (resp. Li-Yorke $\delta$-chaotic set)
is also a  distributional chaotic set in some sequence
(resp. distributional $\delta$-chaotic set in some sequence).
Then using the well-known Mycielski theorem we show that Li-Yorke $\delta$-chaos
is equivalent to distributional $\delta$-chaos in a sequence.
In section 4, we focus on the transitive systems.
We prove that a uniformly chaotic set is a distributional scramble set in some
sequence and
a class of transitive system implies distributional chaos in a sequence.

\section{The structure of the set of all distributional $\delta$-scrambled pairs in the sequence $Q$}

Let $Q=\{m_i\}_{i=1}^\infty$ be a strictly increasing sequence of positive integers and
$P$ be a subsequence of $Q$. The upper limit
$$\limsup_{k\rightarrow\infty}\frac{\#\{P\cap\{m_1,\cdots,m_k\}\}}{k}$$
is called the {\em upper density of $P$ with respect to $Q$}, denoted by $\overline{d}(P\mid Q)$

For every $a\in [0,1]$, put $\overline{\mathcal M}_Q(a)=\{P\subset Q\mid
P \text{ is infinite and }\overline{d}(P\mid Q)\geq
a\}$

Let $(X,f)$ be a TDS and $U\subset X$ be a nonempty open set.
For every $a\in [0,1]$, define
$$\mathcal{F}(U,Q,\overline{\mathcal M}_Q(a))=\{x\in X\mid N(x,U,Q)\in\overline{\mathcal M}_Q(a)\},$$
where $N(x,U,Q)=\{m\in Q\mid f^m(x)\in U\}$.

Following the proof of Theorem 3.2 in \cite{Xiong2007}, we have
\begin{prop}\label{Gset}
Let $(X,f)$ be a TDS and $Q$ be a strictly increasing sequence of positive integers.
Then for every $a\in[0,1]$ and nonempty open set $U\subset X$,
$\mathcal{F}(U,Q,\overline{\mathcal M}_Q(a))$ is a $G_\delta$ set.
\end{prop}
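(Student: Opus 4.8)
The plan is to peel off the two clauses in the definition of $\overline{\mathcal M}_Q(a)$ and to rewrite the upper-density condition as an explicit countable intersection of open sets. First I would introduce, for each $i\ge 1$, the open set $U_i=\{x\in X\mid f^{m_i}(x)\in U\}$ (open because $f^{m_i}$ is continuous and $U$ is open), so that $m_i\in N(x,U,Q)$ exactly when $x\in U_i$. Writing the counting function $c_k(x)=\#\{1\le i\le k\mid x\in U_i\}=\sum_{i=1}^{k}\mathbf 1_{U_i}(x)$, one has $\overline d(N(x,U,Q)\mid Q)=\limsup_{k\to\infty}c_k(x)/k$. At this point I separate the two cases according to $a$. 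When $a=0$, membership in $\overline{\mathcal M}_Q(a)$ asks only that $N(x,U,Q)$ be infinite, and $\{x\mid N(x,U,Q)\text{ is infinite}\}=\bigcap_{K\ge 1}\bigcup_{i>K}U_i$ is visibly $G_\delta$. When $a>0$, any set of upper density at least $a$ is automatically infinite (a finite set has upper density $0$), so the ``infinite'' clause is redundant and $\mathcal F(U,Q,\overline{\mathcal M}_Q(a))=\{x\in X\mid \limsup_{k\to\infty}c_k(x)/k\ge a\}$.

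Next I would convert the $\limsup$ inequality into a $G_\delta$ formula. Using that $\limsup_k c_k(x)/k\ge a$ holds iff for every $n\ge 1$ there are infinitely many $k$ with $c_k(x)/k>a-1/n$, I obtain
$$\mathcal F(U,Q,\overline{\mathcal M}_Q(a))=\bigcap_{n\ge 1}\bigcap_{K\ge 1}\bigcup_{k>K}\bigl\{x\in X\mid c_k(x)>(a-\tfrac1n)k\bigr\}.$$
It therefore suffices to show that each innermost set is open, for then the inner union is open, the intersection over $K$ is $G_\delta$, and a countable intersection of $G_\delta$ sets is again $G_\delta$. The decisive observation is that for fixed $k$ the level sets of $c_k$ are open: for any integer $m$,
$$\{x\in X\mid c_k(x)\ge m\}=\bigcup_{\substack{S\subset\{1,\dots,k\}\\ \#S=m}}\ \bigcap_{i\in S}U_i,$$
a finite union of finite intersections of open sets, hence open (equivalently, $c_k$ is lower semicontinuous as a finite sum of indicators of open sets). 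Since $c_k$ is integer-valued, taking $m$ to be the least integer exceeding $(a-1/n)k$ gives $\{x\mid c_k(x)>(a-1/n)k\}=\{x\mid c_k(x)\ge m\}$, which is open.

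The argument has no deep obstacle; the only points that require care are exactly the two just highlighted. The first is the faithful translation of ``$\limsup\ge a$'' into nested quantifiers so that every set appearing at the innermost level is genuinely open rather than merely Borel --- using a strict inequality $>a-1/n$ together with the lower semicontinuity of $c_k$ is what makes this work, whereas a non-strict $\ge a$ would produce closed level sets and break the $G_\delta$ conclusion. The second is the bookkeeping at the boundary value $a=0$, where the upper-density clause is vacuous and one must fall back on the ``infinite'' clause; this is where the precise definition of $\overline{\mathcal M}_Q(a)$ is used, and it is also the step that lets me discard the ``infinite'' clause for all $a>0$. Assembling these gives that $\mathcal F(U,Q,\overline{\mathcal M}_Q(a))$ is $G_\delta$ for every $a\in[0,1]$, as claimed.
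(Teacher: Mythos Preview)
Your argument is correct. The paper does not actually supply a proof of this proposition; it simply says ``Following the proof of Theorem~3.2 in \cite{Xiong2007}, we have\ldots'' and states the result. What you have written is exactly the standard direct verification one would expect that reference to contain: pull back $U$ along each $f^{m_i}$ to get open sets $U_i$, observe that the partial count $c_k=\sum_{i\le k}\mathbf 1_{U_i}$ is lower semicontinuous so its strict superlevel sets are open, and then unravel the $\limsup$ inequality as a countable $\forall\forall\exists$ combination of those open sets. Your separate handling of $a=0$ (where the upper-density clause is vacuous and only the infiniteness clause survives) versus $a>0$ (where positive upper density already forces infiniteness) is the right bookkeeping, and your remark about needing the \emph{strict} inequality $c_k(x)>(a-\tfrac1n)k$ to land in open sets is precisely the point that makes the $G_\delta$ structure go through. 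There is nothing to add.
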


Note that the metric in the product space $X\times X$ is denoted by $d^2$, i.e.\@ for every
$(x_1,x_2),(y_1,\allowbreak y_2)\in X\times X$, $d^2((x_1,x_2),(y_1,y_2))=\max\{d(x_1,y_1),d(x_2,y_2)\}$.
Now we can easily get a equivalent definition of distributional scrambled pair in the sequence $Q$.

\begin{prop}\label{2}
Let $(X,f)$ be a TDS and $Q$ be a strictly increasing sequence of positive integers.
Then $(x,y)\in X\times X$ is a distributional scrambled pair in the sequence $Q$ if and only if \\
\indent (1) for every $\epsilon>0$, $(x,y)\in
\mathcal{F}([\Delta]_\epsilon,Q,\overline{\mathcal M}_Q(1));$\\
\indent (2) there exists some $t>0$, such that $(x,y)\in \mathcal{F}(X\times
X\setminus\overline{[\Delta]_t},Q,\overline{\mathcal M}_Q(1)).$

\noindent For a given $\delta>0$, $(x,y)\in X\times X$
is a distributional $\delta$-scrambled pair in the sequence $Q$  if and only if \\
\indent (1) for every $\epsilon>0$, $(x,y)\in
\mathcal{F}([\Delta]_\epsilon,Q,\overline{\mathcal M}_Q(1));$\\
\indent (2) $(x,y)\in \mathcal{F}(X\times
X\setminus\overline{[\Delta]_\delta},Q,\overline{\mathcal M}_Q(1)).$
\end{prop}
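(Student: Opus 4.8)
The plan is to put both sides of each equivalence into one common language---the upper $Q$-density of the set of times at which the orbit of the pair $(x,y)$ visits a prescribed open subset of $X\times X$---and then to read off the claimed equivalences directly from the defining formulas for $\Phi$ and $\Phi^\star$. First I would record the reduction. Regarding $(x,y)$ as a point of the product system and writing $N((x,y),U,Q)=\{m\in Q\mid (f^m(x),f^m(y))\in U\}$, membership $(x,y)\in\mathcal F(U,Q,\overline{\mathcal M}_Q(1))$ holds precisely when $\overline d(N((x,y),U,Q)\mid Q)=1$ (the defining inequality $\ge 1$ forces equality, since the density never exceeds $1$, and infiniteness of the return set is then automatic). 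Unwinding $\overline d$, this reads
\[
\limsup_{k\to\infty}\frac1k\#\{1\le i\le k\mid (f^{m_i}(x),f^{m_i}(y))\in U\}=1,
\]
which is exactly the kind of Ces\`aro count that defines $\Phi^n_{(xy,Q)}$.

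Next I would set up a geometric dictionary between the diagonal neighborhoods and the metric sublevel sets. Using the max-metric $d^2$ and the triangle inequality, for every $r>0$ one has
\[
\{(a,b)\mid d(a,b)<r\}\subseteq[\Delta]_r\subseteq\{(a,b)\mid d(a,b)<2r\},
\]
and, taking closures, $\{(a,b)\mid d(a,b)<r\}\subseteq\overline{[\Delta]_r}\subseteq\{(a,b)\mid d(a,b)\le 2r\}$. These inclusions let me sandwich the counting function $\Phi^n_{(xy,Q)}$ between the $\mathcal F$-counts attached to suitably scaled radii.

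Condition (1) is then the easy half. Both ``$\Phi^\star_{(xy,Q)}(t)=1$ for every $t>0$'' and ``$(x,y)\in\mathcal F([\Delta]_\epsilon,Q,\overline{\mathcal M}_Q(1))$ for every $\epsilon>0$'' are universally quantified over a positive parameter, so the factor of $2$ in the dictionary is harmless: feeding in $\epsilon/2$ or $t/2$ and applying the sandwich squeezes both $\limsup$'s between $1$ and $1$. The existential condition (2) in the non-$\delta$ case goes the same way: passing to complements rewrites ``upper density $1$ off $\overline{[\Delta]_t}$'' as $\liminf_{k}\frac1k\#\{1\le i\le k\mid(f^{m_i}(x),f^{m_i}(y))\in\overline{[\Delta]_t}\}=0$, and since $\Phi_{(xy,Q)}(s)=0$ is required only for some $s>0$, choosing $t=s/2$ in one direction and $s=t/2$ in the other and sandwiching yields both implications.

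The hard part will be the $\delta$-version of condition (2): the threshold is pinned at the single value $\delta$ on both sides, so the factor-of-two slack in the dictionary can no longer be absorbed by shrinking the parameter. Here I must match $\Phi_{(xy,Q)}(\delta)=0$, i.e.\ $\liminf_{k}\frac1k\#\{1\le i\le k\mid d(f^{m_i}(x),f^{m_i}(y))\le\delta\}=0$, against $\liminf_{k}\frac1k\#\{1\le i\le k\mid (f^{m_i}(x),f^{m_i}(y))\in\overline{[\Delta]_\delta}\}=0$, which demands that the closed neighborhood $\overline{[\Delta]_\delta}$ be identified with the closed metric condition $d\le\delta$ and not merely up to a factor of two. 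The step I would write out in full is precisely this exact-radius matching---pinning $\overline{[\Delta]_\delta}$ down through the continuity of $(a,b)\mapsto\inf_{z}\max\{d(a,z),d(b,z)\}$ and tracking the strict-versus-nonstrict boundary behaviour at radius $\delta$---so that $\overline{[\Delta]_\delta}$ plays the role that the definition of a distributional $\delta$-scrambled pair assigns to $\{d\le\delta\}$. Once that identification is in place the two $\liminf$'s coincide and the equivalence follows, the remaining bookkeeping being mechanical.
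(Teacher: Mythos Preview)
Your reduction to upper $Q$-densities and your handling of condition~(1), together with the existentially quantified condition~(2) in the non-$\delta$ case, are correct; this is exactly the unwinding the paper has in mind (the paper gives no proof, regarding the proposition as an immediate reformulation).

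The gap is precisely where you flag it, and the fix you propose does not go through. You want to show that $\overline{[\Delta]_\delta}$ coincides with $\{(a,b):d(a,b)\le\delta\}$, but in general it does not. Under the max metric $d^2$, the distance from $(a,b)$ to $\Delta$ is $\inf_{z\in X}\max\{d(a,z),d(b,z)\}$, and while this always lies in $[\,d(a,b)/2,\ d(a,b)\,]$, it can hit the lower endpoint whenever $X$ has (approximate) midpoints. For $X=[0,1]$ one gets $\operatorname{dist}_{d^2}((a,b),\Delta)=|a-b|/2$, hence $[\Delta]_\delta=\{|a-b|<2\delta\}$ and $\overline{[\Delta]_\delta}=\{|a-b|\le 2\delta\}$, which is strictly larger than $\{|a-b|\le\delta\}$. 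No continuity or strict-versus-nonstrict boundary argument can collapse this factor of two; the discrepancy is genuinely geometric. Consequently the two $\liminf$'s you want to match---one counting visits to $\{d\le\delta\}$, the other to $\overline{[\Delta]_\delta}$---are counting visits to different sets, and there is no reason for them to agree at the fixed threshold $\delta$.

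This actually indicates that the $\delta$-clause of the proposition, read literally with the max metric, is misstated. What the paper needs downstream (for Proposition~\ref{Gdeltaset}) is only that the set of distributional $\delta$-scrambled pairs in $Q$ be $G_\delta$, and for that the clean formulation of~(2) is $(x,y)\in\mathcal F(V_\delta,Q,\overline{\mathcal M}_Q(1))$ with $V_\delta:=\{(a,b)\in X\times X:d(a,b)>\delta\}$. Then the equivalence with $\Phi_{(xy,Q)}(\delta)=0$ is exact and immediate, $V_\delta$ is open, and Proposition~\ref{Gset} applies directly. So the correct move is to replace $X\times X\setminus\overline{[\Delta]_\delta}$ by $V_\delta$ in the statement, not to attempt the identification you outline.
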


Combining Proposition \ref{Gset} and \ref{2}, we have

\begin{prop}\label{Gdeltaset}
Let $(X,f)$ be a TDS, $Q$ be a strictly increasing sequence of positive integers and $\delta>0$.
Then the set of all distributional $\delta$-scrambled pairs in the sequence $Q$
is a $G_\delta$ subset of $X\times X$.
\end{prop}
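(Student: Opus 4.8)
The plan is to apply Proposition \ref{2} to rewrite the target set as a countable intersection of sets of the form $\mathcal{F}(\cdot,Q,\overline{\mathcal M}_Q(1))$, each of which is $G_\delta$ by Proposition \ref{Gset}, and then invoke the stability of the $G_\delta$ class under countable intersections. Throughout I would work in the product system $(X\times X,f\times f)$, for which Proposition \ref{Gset} applies verbatim: for a nonempty open $W\subset X\times X$ the set $\mathcal{F}(W,Q,\overline{\mathcal M}_Q(1))$, defined through $N((x,y),W,Q)=\{m\in Q\mid (f\times f)^m(x,y)\in W\}$, is $G_\delta$.

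Writing $D_\delta$ for the set of all distributional $\delta$-scrambled pairs in the sequence $Q$, Proposition \ref{2} gives
$$D_\delta=\Big(\bigcap_{\epsilon>0}\mathcal{F}([\Delta]_\epsilon,Q,\overline{\mathcal M}_Q(1))\Big)\cap\mathcal{F}(X\times X\setminus\overline{[\Delta]_\delta},Q,\overline{\mathcal M}_Q(1)).$$
The second factor is directly $G_\delta$ by Proposition \ref{Gset}. (If $X\times X\setminus\overline{[\Delta]_\delta}$ happens to be empty, then condition (2) of Proposition \ref{2} can never hold, so $D_\delta=\emptyset$ is trivially $G_\delta$; thus I may assume it nonempty.) The remaining issue is the first factor, which is an intersection over the uncountable index set $\epsilon>0$.

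The key step is to reduce this to a countable intersection using monotonicity. If $U\subset V$ are open in $X\times X$, then $N((x,y),U,Q)\subset N((x,y),V,Q)$, so the upper density with respect to $Q$ can only increase; consequently $\mathcal{F}(U,Q,\overline{\mathcal M}_Q(1))\subset\mathcal{F}(V,Q,\overline{\mathcal M}_Q(1))$. Since $[\Delta]_\epsilon\subset[\Delta]_{\epsilon'}$ whenever $\epsilon\le\epsilon'$, the family $\{\mathcal{F}([\Delta]_\epsilon,Q,\overline{\mathcal M}_Q(1))\}_{\epsilon>0}$ is nested increasing in $\epsilon$, and therefore
$$\bigcap_{\epsilon>0}\mathcal{F}([\Delta]_\epsilon,Q,\overline{\mathcal M}_Q(1))=\bigcap_{k=1}^\infty\mathcal{F}([\Delta]_{1/k},Q,\overline{\mathcal M}_Q(1)).$$

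Finally, each term on the right is $G_\delta$ by Proposition \ref{Gset} (every $[\Delta]_{1/k}$ is a nonempty open subset of $X\times X$), and a countable intersection of $G_\delta$ sets is again $G_\delta$; intersecting with the second factor keeps us in the $G_\delta$ class, which closes the argument. I expect the only genuine subtlety to be the reduction of the intersection over all $\epsilon>0$ to the countable subfamily $\epsilon=1/k$, that is, the monotonicity of $\mathcal{F}(\cdot,Q,\overline{\mathcal M}_Q(1))$ in its open-set argument. Without it one would be intersecting uncountably many $G_\delta$ sets, and such intersections need not be $G_\delta$, so this is the step on which the whole proposition hinges.
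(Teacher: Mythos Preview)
Your proof is correct and follows exactly the route the paper indicates: the paper's own argument is the single line ``Combining Proposition \ref{Gset} and \ref{2}'', and you have simply filled in the details of that combination. The reduction of the intersection over all $\epsilon>0$ to the countable family $\epsilon=1/k$ via monotonicity of $\mathcal{F}(\cdot,Q,\overline{\mathcal M}_Q(1))$ is a detail the paper leaves implicit, but it is needed and you handle it correctly.
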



\section{Li-Yorke~$\delta$-chaos is equivalent to distributional $\delta$-chaos in a sequence}

Let $(X,f)$ be a TDS. A subset $C$ of $X$ is called a {\em Cantor set},
if it is homeomorphic to the standard Cantor ternary set.
A subset $A$ of $X$ is called a {\em Mycielski set}, if it is a union of countable Cantor sets.

\begin{thm}[Mycielski\cite{Mycielski}]\label{thm:Mycielski-cor}
Let a $X$ be a complete second countable metric space without isolated points.
If $R$ is a dense $G_\delta$ subset of $X\times X$, then there exists some
dense Mycielski subset $K\subset X$ such that $K\times K\setminus\Delta\subset R$.
\end{thm}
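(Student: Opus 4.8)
The plan is to prove this by the classical Cantor-scheme (fusion) construction. First I would write the dense $G_\delta$ set as $R=\bigcap_{n\ge 1}G_n$, where each $G_n$ is open and dense in $X\times X$ and, after replacing $G_n$ by $G_1\cap\cdots\cap G_n$, the sequence is decreasing. Since $X$ is complete, second countable and without isolated points, fix a complete compatible metric and a countable base $\{W_j\}_{j\ge 1}$ of nonempty open sets; the absence of isolated points is exactly what lets me split any nonempty open set into two disjoint nonempty open subsets at each step.

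The core of the argument is to construct nonempty open sets $\{U_s\}$ indexed by finite binary strings $s\in 2^{<\omega}$, by induction on $|s|$, so that (i) $\overline{U_{s0}}\cap\overline{U_{s1}}=\emptyset$ and $\overline{U_{s0}}\cup\overline{U_{s1}}\subseteq U_s$; (ii) $\operatorname{diam}(U_s)\le 2^{-|s|}$; and (iii) for all distinct $s,t$ with $|s|=|t|=n$ one has $\overline{U_s}\times\overline{U_t}\subseteq G_n$. Passing from level $n$ to level $n+1$, I first split and shrink each $U_s$ into two disjoint candidate children of small diameter, and then enforce (iii): since there are only finitely many ordered pairs $(s,t)$ of distinct strings of length $n+1$, and since $G_{n+1}$ is open and dense, I handle the pairs one at a time, at each step shrinking the two relevant open sets to nonempty open subsets whose product-closure lies inside $G_{n+1}$, without destroying the finitely many conditions already arranged. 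Taking $C=\{x_\alpha:\alpha\in 2^\omega\}$ where $\{x_\alpha\}=\bigcap_n U_{\alpha|n}$, completeness together with $\operatorname{diam}(U_s)\to 0$ makes each branch converge to a single point, the assignment $\alpha\mapsto x_\alpha$ is a homeomorphism onto $C$, so $C$ is a Cantor set; and if $x_\alpha\ne x_\beta$ then $\alpha,\beta$ first differ at some level $n$ and stay distinct thereafter, whence $(x_\alpha,x_\beta)\in\overline{U_{\alpha|m}}\times\overline{U_{\beta|m}}\subseteq G_m$ for every $m\ge n$, while $(x_\alpha,x_\beta)\in G_n\subseteq G_m$ for every $m\le n$ by monotonicity; thus $C\times C\setminus\Delta\subseteq R$.

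To upgrade this single Cantor set to a dense Mycielski set, I would run the scheme once inside each basic open set $W_j$, producing a Cantor set $C_j\subseteq W_j$; then $K=\bigcup_j C_j$ meets every nonempty open set and is a countable union of Cantor sets, i.e.\@ a dense Mycielski set. The subtlety is that $K\times K\setminus\Delta\subseteq R$ requires the product-in-$G_n$ condition also for pairs whose two points lie in \emph{different} $C_j$'s, so condition (iii) must be imposed across the union of all the interleaved schemes at once, not merely within a single one.

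The step I expect to be the main obstacle is precisely this simultaneous bookkeeping: arranging the construction so that (a) every basic open set eventually receives a Cantor set, guaranteeing density, while (b) the product-in-$G_n$ requirement is enforced for \emph{all} pairs of distinct nodes across \emph{all} the Cantor schemes. Keeping only finitely many live constraints active at each finite stage — so that openness and density of the $G_n$ suffice to meet them by further shrinking — is what makes the fusion go through, and getting that accounting right is the delicate part of the proof.
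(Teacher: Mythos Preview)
The paper does not prove this statement: it is quoted as Mycielski's theorem with a citation to \cite{Mycielski} and is used as a black box in the proof of Theorem~\ref{thm:li-yorke-mycielski}. So there is no ``paper's own proof'' to compare against.

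That said, your sketch is the standard Cantor-scheme argument for Mycielski's theorem and is essentially correct. You have correctly isolated the only real difficulty, namely that the inclusion $K\times K\setminus\Delta\subset R$ demands the product-in-$G_n$ condition across \emph{all} the schemes simultaneously, not just within each one, and your proposed fix---interleaving the schemes so that at every finite stage only finitely many nodes are alive, and shrinking them one pair at a time using the openness and density of $G_{n}$---is exactly how this is done. One small organizational point worth making explicit when you write it out: rather than running a separate scheme ``inside each $W_j$'' and then worrying about cross-terms, it is cleaner to run a single scheme whose tree is $\bigcup_j\{j\}\times 2^{<\omega}$, introducing the root of the $j$-th copy at stage $j$ inside $W_j$ and thereafter treating all live nodes on an equal footing when enforcing condition~(iii). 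This makes the ``finitely many constraints per stage'' bookkeeping transparent and yields directly a countable union of Cantor sets meeting every $W_j$.
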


\begin{lem}\cite{Gu}\label{SEQ}
Let $\{S_i\}_{i=1}^{\infty}$ be a sequence of strictly increasing sequences of positive integers.
Then there exists a strictly increasing sequence $Q$ of positive integers
such that $\overline{d}(S_i\cap Q\mid Q)=1$ for every $i\geq 1$.
\end{lem}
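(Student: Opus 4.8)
The plan is to build $Q$ by a block-by-block diagonal construction, appending at each stage a long run of elements drawn from one of the $S_i$, and arranging that every index $i$ is visited infinitely often so that the upper density of $S_i\cap Q$ spikes arbitrarily close to $1$ along a subsequence. The essential point is that $\overline{d}(\cdot\mid Q)$ is a $\limsup$, so I do not need the different sequences $S_i$ to cooperate simultaneously; each one only needs its density to approach $1$ along its own subsequence of indices.

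First I would fix an auxiliary index sequence $\{i_n\}_{n=1}^\infty$ in which every positive integer occurs infinitely often (for instance $1;1,2;1,2,3;\dots$). I then construct $Q$ as an increasing union of finite initial segments $Q_1\subset Q_2\subset\cdots$. Starting from $Q_0=\emptyset$, suppose after stage $n-1$ the segment $Q_{n-1}$ consists of the first $k_{n-1}$ elements of $Q$ and its largest element is $q$. At stage $n$, put $i=i_n$; since $S_i$ is infinite I can select $L_n$ consecutive elements of $S_i$ that all exceed $q$ and append them, obtaining $Q_n$ with $k_n=k_{n-1}+L_n$ elements. Because every appended element lies in $S_i\cap Q$, I get $\#(S_i\cap Q_n)\ge L_n$, hence $\#(S_i\cap Q_n)/k_n\ge L_n/(k_{n-1}+L_n)$.

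The key quantitative step is the choice of $L_n$. A short computation shows that $L_n/(k_{n-1}+L_n)\ge 1-\tfrac1n$ holds as soon as $L_n\ge (n-1)k_{n-1}$, so I simply take $L_n\ge\max\{1,(n-1)k_{n-1}\}$. Then at the end of every stage $n$ the proportion of the elements of the initial segment $\{q_1,\dots,q_{k_n}\}$ that lie in $S_i$ is at least $1-\tfrac1n$. Since each block is appended strictly above the previous one, $Q=\bigcup_n Q_n$ is a genuine strictly increasing sequence, and it is infinite because at least one element is added at each stage.

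Finally I verify the density claim. Fix $i$; by construction $i_n=i$ for infinitely many $n$, and for each such $n$ the proportion at index $k_n$ is at least $1-\tfrac1n$. Letting $n\to\infty$ through these values forces $\limsup_k \#\{S_i\cap\{q_1,\dots,q_k\}\}/k\ge 1$, and since this ratio never exceeds $1$ we conclude $\overline{d}(S_i\cap Q\mid Q)=1$. I expect the only genuine obstacle to be conceptual rather than computational: one cannot make the ordinary density of every $S_i\cap Q$ equal to $1$ at once, but the definition asks only for the upper density, so the $\limsup$ lets each $S_i$ \emph{take its turn} infinitely often, which is exactly what the recurring index sequence $\{i_n\}$ supplies.
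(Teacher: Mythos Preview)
Your construction is correct: the block-by-block diagonalisation with the index sequence $\{i_n\}$ hitting every $i$ infinitely often, together with the growth condition $L_n\ge(n-1)k_{n-1}$, does exactly what is needed, and the verification that $\overline{d}(S_i\cap Q\mid Q)=1$ via the subsequence $\{k_n:i_n=i\}$ is clean.

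Note, however, that the paper does not supply its own proof of this lemma: it is quoted from \cite{Gu} and stated without argument. So there is no in-paper proof to compare against. Your argument is the standard one for results of this type (and is almost certainly what \cite{Gu} does as well): one exploits that upper density is a $\limsup$, so the sequences $S_i$ need only dominate $Q$ along their own private subsequences of cut-points, which the alternating-block scheme provides. Nothing is missing.
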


\begin{thm}\label{countable}
Let $(X,f)$ be a TDS and $\delta>0$. If $C\subset X$ is a countable Li-Yorke chaotic set
(resp.\@ Li-Yorke $\delta$-chaotic set),
then there exists a strictly increasing sequence $Q$ of positive integers
such that $C$ is a distributional chaotic set in the sequence $Q$
(resp.\@ distributional $\delta$-chaotic set in the sequence $Q$).
\end{thm}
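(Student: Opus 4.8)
The plan is to handle both statements simultaneously, manufacturing a single sequence $Q$ from the Li-Yorke data by means of Lemma \ref{SEQ}. Since $C$ is countable, write $C=\{x_1,x_2,\dots\}$ and consider the countably many pairs of distinct points of $C$. For each such pair $(x,y)$ and each integer $k\ge 1$, set $A_k(x,y)=\{n\ge 1\mid d(f^n(x),f^n(y))<1/k\}$; the hypothesis $\liminf_{n\to\infty}d(f^n(x),f^n(y))=0$ makes each $A_k(x,y)$ infinite. For the separation side, in the $\delta$-case put $B(x,y)=\{n\ge 1\mid d(f^n(x),f^n(y))>\delta\}$, which is infinite because $\limsup_{n\to\infty}d(f^n(x),f^n(y))>\delta$; in the general case first fix any $s=s(x,y)\in\bigl(0,\limsup_{n\to\infty}d(f^n(x),f^n(y))\bigr)$ and put $B(x,y)=\{n\ge 1\mid d(f^n(x),f^n(y))>s\}$, again infinite.

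Next I would collect all the sets $A_k(x,y)$ (over distinct pairs and all $k\ge 1$) together with all the sets $B(x,y)$ into one family $\mathcal{S}$. This family is countable and each of its members is an infinite, hence strictly increasing, sequence of positive integers, so Lemma \ref{SEQ} applies and yields a single strictly increasing sequence $Q$ of positive integers with $\overline{d}(S\cap Q\mid Q)=1$ for every $S\in\mathcal{S}$.

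Finally I would check that every pair of distinct points of $C$ is distributional (resp. distributional $\delta$-) scrambled in this $Q$. Fix distinct $x,y\in C$ and write $T_t=\{m\in Q\mid d(f^m(x),f^m(y))\le t\}$; rewriting the counting quotients as densities relative to $Q$ gives $\Phi^\star_{(xy,Q)}(t)=\overline{d}(T_t\mid Q)$ and $\Phi_{(xy,Q)}(t)=1-\overline{d}(Q\setminus T_t\mid Q)$. For condition (1), given $t>0$ pick $k$ with $1/k\le t$; then $A_k(x,y)\cap Q\subseteq T_t$, and since $\overline{d}(A_k(x,y)\cap Q\mid Q)=1$, monotonicity of upper density forces $\overline{d}(T_t\mid Q)=1$, i.e. $\Phi^\star_{(xy,Q)}(t)=1$. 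For condition (2) observe $B(x,y)\cap Q=Q\setminus T_\delta$ (resp. $Q\setminus T_s$), so $\overline{d}(Q\setminus T_\delta\mid Q)=1$ and hence $\Phi_{(xy,Q)}(\delta)=0$ (resp. $\Phi_{(xy,Q)}(s)=0$). Thus every $(x,y)\in C\times C\setminus\Delta$ is distributional $\delta$-scrambled (resp. distributional scrambled, with the pair-dependent $s=s(x,y)$) in $Q$, which is the claim. If preferred, conditions (1)–(2) can instead be read off from Proposition \ref{2} as the membership of $(x,y)$ in the relevant sets $\mathcal{F}(\cdot,Q,\overline{\mathcal M}_Q(1))$.

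The construction carries essentially no obstacle of its own: because we are free to choose $Q$ after the fact, the $\liminf$/$\limsup$ data need only supply \emph{infinitely many} good instants, not any positive density. All the force is concentrated in Lemma \ref{SEQ}, which upgrades these countably many infinite index sets to full upper density relative to $Q$ at once; the only hand computation is the density bookkeeping that turns $\overline{d}(S\cap Q\mid Q)=1$ into the values $\Phi^\star=1$ and $\Phi=0$. The one point worth flagging is that in the non-$\delta$ case the witness $s$ is allowed to depend on the pair, in accordance with Definition \ref{1}, so no uniform separation across pairs is required.
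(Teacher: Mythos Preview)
Your proof is correct and follows essentially the same route as the paper: enumerate the countably many pairs, extract from the Li--Yorke hypotheses countably many infinite index sets witnessing proximality and separation, feed them all into Lemma~\ref{SEQ} to obtain a single $Q$, and then read off $\Phi^\star=1$ and $\Phi=0$ (the paper phrases this last step via Proposition~\ref{2}). The only cosmetic difference is that the paper uses one convergent subsequence $P_{i,j}$ per pair for the proximal side (so that ``eventually $<t$'' handles all $t>0$ at once), whereas you use the family $\{A_k(x,y)\}_{k\ge1}$; both encodings are countable, so Lemma~\ref{SEQ} applies either way.
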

\begin{proof}
Let $C=\{x_i\in X:\, i=1,2,\ldots\}$. By the definition of Li-Yorke scrambled pair,
for every $i\neq j$, there exists $\delta_{ij}>0$ such that
\[\liminf_{n\to\infty}d(f^n(x_i), f^n(x_j))=0,\quad
\limsup_{n\to\infty}d(f^n(x_i), f^n(x_j))> \delta_{ij}.\]
i.e., there are two strictly increasing sequences
$P_{i,j}=\{n_k^{i,j}\}_{k=1}^\infty$ and $S_{i,j}=\{m_k^{i,j}\}_{k=1}^\infty$ such that
\[\lim_{k\to\infty}d(f^{n_k^{i,j}}(x_i), f^{n_k^{i,j}}(x_j))=0,\quad
\lim_{k\to\infty}d(f^{m_k^{i,j}}(x_i), f^{m_k^{i,j}}(x_j))>\delta_{ij}.\]
By Lemma \ref{SEQ}, there exists a strictly increasing sequence $Q$ such that
\[\overline{d}(P_{i,j}\cap Q|Q)=\overline{d}(S_{i,j}\cap Q|Q)=1,\ \forall i\neq j.\]
Then it is easy to see that for every $t>0$
\[(x_i, x_j)\in\mathcal F([\Delta]_t,Q,\overline{\mathcal M}_Q(1)), \quad
(x_i, x_j)\in\mathcal F(X\times X\setminus\overline{[\Delta]_{\delta_{ij}}},Q,\overline{\mathcal M}_Q(1)),\
\forall i\neq j.\]
By Proposition \ref{2}, $C$ is a distributional chaotic set in the sequence $Q$.

For a given $\delta>0$, if $C$ is a Li-Yorke $\delta$-chaotic set,
then we can choose all the above $\delta_{ij}$ being $\delta$,
therefore, $C$ is a distributional $\delta$-chaotic set in the sequence $Q$.
\end{proof}

\begin{rem}
In \cite{HuangYe}, the authors constructed a countable compact space $X$
and a homeomorphism on $X$ with the whole space being a Li-Yorke chaotic set,
then there exists a strictly increasing sequence $Q$ of positive integers
such that $X$ is a distributional chaotic set in the sequence $Q$.
As the method of Theorem 4.3 in \cite{HuangYe}, we can construct a homeomorphism of the Cantor set with the
whole space being a distributional chaotic set in some sequence.
\end{rem}

\begin{thm}\label{thm:li-yorke-mycielski}
Let $(X,f)$ be a TDS and $\delta>0$. Then $(X,f)$ is Li-Yorke $\delta$-chaos if and only if
it is distributional chaos in a sequence.
\end{thm}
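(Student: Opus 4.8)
The plan is to prove both implications, of which the reverse one is routine and the forward one carries essentially all the content. For the reverse implication, suppose $(X,f)$ is distributional $\delta$-chaotic in some sequence $P$, witnessed by an uncountable distributional $\delta$-chaotic set $D$. Fixing any off-diagonal pair $(x,y)\in D\times D$, the condition $\Phi^{\star}_{(xy,P)}(t)=1$ for every $t>0$ forces $d(f^{m_i}(x),f^{m_i}(y))\le t$ for infinitely many $i$ and for every $t$, so that $\liminf_{n}d(f^n(x),f^n(y))=0$; while $\Phi_{(xy,P)}(\delta)=0$ forces $d(f^{m_i}(x),f^{m_i}(y))>\delta$ along a set of indices of upper density one, so that $\limsup_{n}d(f^n(x),f^n(y))\ge\delta$. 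Thus $D$ is a Li-Yorke $\delta'$-chaotic set for every $\delta'<\delta$, giving Li-Yorke $\delta$-chaos in the required sense, and this direction is immediate from Definitions \ref{1} once the distribution functions are read as asymptotic densities.

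For the forward implication, let $C$ be an uncountable Li-Yorke $\delta$-chaotic set. First I would reduce $C$ to a countable subset whose closure is perfect, so that Theorem \ref{thm:Mycielski-cor} becomes applicable. Let $C^{\star}$ be the set of those points of $C$ that are condensation points of $C$. Since $X$ is second countable, the set $C\setminus C^{\star}$ is countable, so $C^{\star}$ is still uncountable, and a short argument shows that every point of $C^{\star}$ is in fact a condensation point of $C^{\star}$ itself. Consequently the closure $Y:=\overline{C^{\star}}$ has no isolated points, i.e.\ $Y$ is a perfect compact metric space, hence complete and second countable. I would then fix a countable dense subset $C_0\subseteq C^{\star}$; since $C_0\subseteq C$, the set $C_0$ is a countable Li-Yorke $\delta$-chaotic set, and $\overline{C_0}=Y$.

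Next I would apply Theorem \ref{countable} to the countable set $C_0$ to produce a strictly increasing sequence $Q$ of positive integers for which $C_0$ is a distributional $\delta$-chaotic set in $Q$. Writing $R$ for the set of all distributional $\delta$-scrambled pairs in the sequence $Q$, this says exactly that $(C_0\times C_0)\setminus\Delta\subseteq R$. By Proposition \ref{Gdeltaset}, $R$ is a $G_\delta$ subset of $X\times X$, so its trace $R\cap(Y\times Y)$ is a $G_\delta$ subset of $Y\times Y$. Because $C_0$ is dense in $Y$ and $Y$ is perfect, the set $(C_0\times C_0)\setminus\Delta$ is dense in $Y\times Y$, and therefore $R\cap(Y\times Y)$ is a dense $G_\delta$ subset of $Y\times Y$. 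Applying Theorem \ref{thm:Mycielski-cor} to the perfect space $Y$ and this dense $G_\delta$ set yields a dense Mycielski set $K\subseteq Y$ with $K\times K\setminus\Delta\subseteq R$. As $K$ is a union of Cantor sets it is uncountable, and every off-diagonal pair of $K$ is a distributional $\delta$-scrambled pair in $Q$; hence $K$ is an uncountable distributional $\delta$-chaotic set in $Q$, and $(X,f)$ is distributional $\delta$-chaotic in a sequence.

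I expect the main obstacle to be precisely the reduction step. Starting from a merely uncountable (not assumed Borel or analytic) scrambled set $C$, I must extract a countable subset of $C$ whose closure is perfect, since Mycielski's theorem requires a space without isolated points and the $G_\delta$ set $R$ must be dense there. The condensation-point argument supplies exactly such a subset, and the verification that density of $C_0$ in the perfect space $Y$ upgrades the inclusion $(C_0\times C_0)\setminus\Delta\subseteq R$ to denseness of $R$ in $Y\times Y$ is the key link that lets Mycielski's theorem convert a countable chaotic set into an uncountable one.
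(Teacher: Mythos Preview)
Your proposal is correct and follows essentially the same route as the paper: pass to a countable dense subset of (a perfect part of) the Li-Yorke $\delta$-chaotic set, apply Theorem~\ref{countable} to obtain a sequence $Q$, use Proposition~\ref{Gdeltaset} to see that the distributional $\delta$-scrambled pairs form a $G_\delta$ set, and then invoke Mycielski's theorem. The only differences are cosmetic: the paper simply says ``without loss of generality assume $D$ has no isolated points'' where you spell out the condensation-point argument, and you supply the easy reverse implication that the paper leaves implicit.
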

\begin{proof}
Let $D$ be an uncountable Li-Yorke $\delta$-chaotic set. Since $X$ is a complete separable metric space,
without lose of generality, assume that $D$ has no isolated points. Put $X_0=\overline D$, then
$D$ is a complete separable metric subspace without isolated points.
Choose a countable dense subset $C$ of $D$. By Theorem \ref{countable},
there exists a strictly increasing sequence $Q$ of positive integers
such that $C$ is a distributional $\delta$-chaotic set in the sequence $Q$.

Denote $E$ be the collection of all distributional $\delta$-scrambled pairs in the sequence $Q$.
By Proposition \ref{Gdeltaset}, $E$ is a $G_\delta$ subset of $X\times X$.
Since $C\times C\setminus \Delta\subset E$ and $C$ is dense in $X_0$, By Mycielski Theorem,
There exists an uncountable Mycielski set $D_0\subset X_0$ such that
$D_0\times D_0\setminus \Delta\subset E$. Thus, $(X,f)$ is distributional chaos in a sequence.
\end{proof}

\begin{rem} (1) For the system $(I,f)$ on the unit closed $I=[0,1]$,
\cite{Smital1989} proved that if $(I,f)$ has a Li-Yorke pair,
then there exists some $\delta>0$ such that it is Li-Yorke $\delta$-chaotic.
Therefore, it is also distributional $\delta$-chaotic in a sequence.

(2) \cite{Blanchard} proved that positive entropy implies Li-Yorke chaos.
By the proof of Theorem 2.3 in \cite{Blanchard}, if $(X,f)$ has positive entropy, then
there exists some $\delta>0$ such that it is Li-Yorke $\delta$-chaotic.
Therefore, it is also distributional $\delta$-chaotic in a sequence.

(3) Let $(X,f)$ be a TDS. The system $(X,f)$ is called {\em topologically transitive},
if for every two nonempty open subsets $U,V\subset X$,
there exists $n\in\mathbb N$ such that $f^n(U)\cap V\neq\emptyset$.
The system $(X,f)$ is called {\em Devaney chaotic}, if it is transitive and has dense periodic points.
\cite{Mai} proved that if $(X,f)$ is Devaney chaotic, then
there exists some $\delta>0$ such that it is Li-Yorke $\delta$-chaotic.
Therefore, it is also distributional $\delta$-chaotic in a sequence.

(4) Let $(X,f)$ be a TDS. The system $(X,f)$ is called {\em topologically weakly mixing},
if the product system $(X\times X, f\times f)$ is transitive.
By the main result of \cite{Xiong1991}, if $(X,f)$ is weakly mixing, then
there exists some $\delta>0$ such that it is Li-Yorke $\delta$-chaotic.
Therefore, it is also distributional $\delta$-chaotic in a sequence.
\end{rem}

\section{Transitive systems}
\begin{de}\cite{Shao}
Let $(X,f)$ be a TDS.
\begin{enumerate}
\item A subset $A$ of $X$ is called {\em uniformly proximal},
if for every $\epsilon>0$, there exists $k\in\mathbb N$
such that $ d(f^{k}(x),f^{k}(y))<\epsilon$ for every $x,y\in A$.
\item A subset $A$ of $X$  is called {\em uniformly rigid},
if for every $\epsilon>0$ there exists $k\in\mathbb N$
such that $d(f^{k}(x),x)<\epsilon$ for every $x\in A$.
\item A subset $A$ of $X$  is called {\em uniformly chaotic},
if there exists a sequence of Cantor sets
$C_1\subset C_2\subset \cdots$ such that $A=\bigcup_{i=1}^{\infty}C_i$ and $C_i$
is both uniformly proximal and uniformly rigid for every $i\geq 1$.
\end{enumerate}
\end{de}

\begin{thm}\label{uniform}
Let $(X,f)$ be a TDS and $A\subset X$.
If $A$ is a uniformly chaotic set,
then there exists a strictly increasing sequence $Q$ of positive integers
such that $A$ is a distributional chaotic set in the sequence $Q$.
\end{thm}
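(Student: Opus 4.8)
The plan is to follow the template of the proof of Theorem \ref{countable}, but to exploit the \emph{uniformity} in the hypotheses so as to produce only countably many auxiliary sequences, to which Lemma \ref{SEQ} can be applied. Write $A=\bigcup_{i=1}^{\infty}C_i$ with $C_1\subset C_2\subset\cdots$ nested Cantor sets, each uniformly proximal and uniformly rigid. The crucial observation is that for a fixed $C_i$ a single time witnesses proximality (resp.\ rigidity) \emph{simultaneously} for every pair of points of $C_i$; hence, although $A$ may be uncountable, we need only two sequences per index $i$, and this is exactly what keeps Lemma \ref{SEQ} applicable.

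First I would extract the sequences. For each $i$ and each $n\ge1$, uniform proximality of $C_i$ furnishes a time $k_{i,n}$ with $\sup_{x,y\in C_i}d(f^{k_{i,n}}(x),f^{k_{i,n}}(y))<1/n$, and uniform rigidity furnishes a time $l_{i,n}$ with $\sup_{x\in C_i}d(f^{l_{i,n}}(x),x)<1/n$. These times may be taken to form strictly increasing sequences $P_i=\{k_{i,n}\}_n$ and $R_i=\{l_{i,n}\}_n$: the witnessing times are necessarily unbounded, since if the set of good times were bounded then some iterate $f^{k^\star}$ would collapse $C_i$ to a point (resp.\ restrict to the identity on $C_i$), in which case all larger iterates (resp.\ all multiples of $k^\star$) witness the same property, contradicting boundedness. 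Applying Lemma \ref{SEQ} to the countable family $\{P_i\}_i\cup\{R_i\}_i$ yields one strictly increasing sequence $Q$ with $\overline{d}(P_i\cap Q\mid Q)=\overline{d}(R_i\cap Q\mid Q)=1$ for every $i$.

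Next I would verify scrambling for an arbitrary pair $(x,y)\in A\times A\setminus\Delta$, using the reformulation in Proposition \ref{2}. Since the $C_i$ are nested and cover $A$, there is $i_0$ with $x,y\in C_{i_0}$; set $\rho=d(x,y)>0$. For condition (1), fix $\epsilon>0$ and $N$ with $1/N<\epsilon$; every $k_{i_0,n}$ with $n\ge N$ lies in $N((x,y),[\Delta]_\epsilon,Q)$, so this witness set contains all but finitely many elements of $P_{i_0}\cap Q$. Since deleting finitely many integers leaves $\overline{d}(\cdot\mid Q)$ unchanged, its upper density with respect to $Q$ equals $\overline{d}(P_{i_0}\cap Q\mid Q)=1$, whence $(x,y)\in\mathcal F([\Delta]_\epsilon,Q,\overline{\mathcal M}_Q(1))$. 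For condition (2), take $t=\rho/4$. The triangle inequality gives $d(f^{l_{i_0,n}}(x),f^{l_{i_0,n}}(y))\ge\rho-2/n$, so once $2/n<\rho/2$ this distance exceeds $2t$, which (using $d^2(\cdot,\Delta)\ge\tfrac12 d(\cdot,\cdot)$ on $X\times X$) forces $(f^{l_{i_0,n}}(x),f^{l_{i_0,n}}(y))\notin\overline{[\Delta]_t}$. Thus $N((x,y),X\times X\setminus\overline{[\Delta]_t},Q)$ contains all but finitely many elements of $R_{i_0}\cap Q$ and so has upper density $1$ with respect to $Q$, giving $(x,y)\in\mathcal F(X\times X\setminus\overline{[\Delta]_t},Q,\overline{\mathcal M}_Q(1))$. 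By Proposition \ref{2}, $(x,y)$ is a distributional scrambled pair in $Q$; as the pair was arbitrary, $A$ is a distributional chaotic set in the sequence $Q$.

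The step I expect to be the main obstacle is the translation of uniform rigidity into genuine separation along a set of times of upper density one. Rigidity by itself only says orbits return near their starting points; the estimate $d(f^{l}(x),f^{l}(y))\ge d(x,y)-d(f^{l}(x),x)-d(f^{l}(y),y)$ is what converts this into a uniform lower bound $\rho-2/n$ on the separation of a \emph{distinct} pair, valid simultaneously for all pairs in $C_{i_0}$. It is precisely this uniformity that keeps the number of auxiliary sequences countable and lets a single $Q$ serve every pair at once; replacing ``uniform'' by merely pointwise proximality and rigidity would force uncountably many sequences and break the application of Lemma \ref{SEQ}.
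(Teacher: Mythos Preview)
Your argument is correct and follows essentially the same route as the paper's own proof: for each Cantor piece $C_i$ extract one ``proximality'' sequence and one ``rigidity'' sequence, feed the resulting countable family into Lemma~\ref{SEQ} to obtain a single $Q$, and then verify Proposition~\ref{2} for an arbitrary pair by passing to a common $C_{i_0}$. Your write-up is in fact more careful than the paper's (which is quite terse and has some notational slips): you make explicit why the witnessing times can be chosen strictly increasing, and you spell out the triangle-inequality estimate $d(f^{l}(x),f^{l}(y))\ge d(x,y)-2/n$ that turns uniform rigidity into uniform separation, together with the metric comparison $d^2((a,b),\Delta)\ge\tfrac12 d(a,b)$ needed to land outside $\overline{[\Delta]_t}$.
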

\begin{proof}
By the definition of uniformly chaos, there exists a sequence of Cantor sets
$C_1\subset C_2\subset \cdots$ such that $A=\bigcup_{i=1}^{\infty}C_i$ and $C_i$
is both uniformly proximal and uniformly rigid for every $i\geq 1$.
Then there are two strictly increasing sequences
$P_{N}=\{n_k\}_{k=1}^\infty$ and $S_N=\{m_k\}_{k=1}^\infty$ such that
\[\lim_{k\to\infty}d(f^{n_k^{i,j}}(x), f^{n_k^{i,j}}(y))=0,\quad
\lim_{k\to\infty}d(f^{m_k^{i,j}}(x), f^{m_k^{i,j}}(y))=d(x,y)>0,\ \forall x\neq y\in A_N.\]
By Lemma \ref{SEQ}, there exists a strictly increasing sequence $Q$ such that
\[\overline{d}(P_{N}\cap Q|Q)=\overline{d}(S_{N}\cap Q|Q)=1,\ \forall N\in\mathbb N.\]
Similarly to the proof of Theorem \ref{countable}, we have $A$ is
a distributional chaotic set in the sequence $Q$.
\end{proof}

\begin{thm}\label{Shao}\cite{Shao}
Let $(X,f)$ be a transitive system, where $X$ is a compact metric space without isolated points.
If there exists a subsystem $(Y,f)$ such that $(X\times Y,f\times f)$ is transitive, then
$(X,f)$ has a dense Mycielski uniformly chaotic set.
\end{thm}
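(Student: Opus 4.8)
The plan is to produce the required set as the union of a nested sequence of Cantor sets $C_1\subset C_2\subset\cdots$, built by a Cantor-scheme (tree) construction, in which each $C_i$ is forced to be simultaneously uniformly proximal and uniformly rigid, and whose union is arranged to be dense. The two dynamical features are obtained by threading two kinds of good times into the construction: \emph{collapsing times} $k$, at which the current finite approximation is pushed into a ball of small radius (yielding uniform proximality), and \emph{return times} $k$, at which the current approximation is mapped back to within a small distance of where it started (yielding uniform rigidity). Since $X$ has no isolated points, every closed cell with nonempty interior can be split into two disjoint closed subcells, so the scheme can be carried out with diameters tending to $0$, and the limit of the scheme is a Cantor set.

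The engine of the construction is the hypothesis that $(X\times Y,f\times f)$ is transitive. From it I would extract the basic \emph{synchronization} statement: for nonempty open $U_1,U_2\subset X$ and nonempty relatively open $V_1,V_2\subset Y$ there is a single $n$ with $f^nU_1\cap U_2\neq\emptyset$ and $f^nV_1\cap V_2\neq\emptyset$. Fixing a well-chosen point $p\in Y$ (for instance an almost periodic point of a minimal subset of $Y$) and tracking the $Y$-coordinate near $p$ lets me steer a chosen open subset of $X$ toward $p$ at a common time; this is the source of proximality, since all the cells present at a given stage can be driven simultaneously toward the single point $p$ and so become pairwise close. Rigidity is produced by combining transitivity with recurrence (in a transitive system without isolated points the recurrent, indeed transitive, points are dense), refined through the same product structure so that a single near-return time acts on all current cells at once.

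Concretely, at stage $n$ the scheme carries finitely many closed cells. I would first subdivide each cell into two subcells of diameter at most $\epsilon_n$; then apply the synchronization statement to obtain a collapsing time and, pulling back, replace each subcell by a nonempty closed piece whose image lies in a fixed ball of radius $\epsilon_n$; then similarly secure a return time $k_n'$ with $d(f^{k_n'}x,x)<\epsilon_n$ on every surviving piece. Choosing $\epsilon_n\to0$ forces the diameters to $0$, so the scheme limits to a Cantor set $C$, and the collapsing and return estimates, arranged along the sequence $\epsilon_n\to0$, pass to the limit to give uniform proximality and uniform rigidity of $C$. The main obstacle is performing the collapse and the return \emph{uniformly and simultaneously} over all cells at each stage with a single time, while keeping these two requirements mutually compatible across stages and still guaranteeing that the limit is a genuine perfect, uncountable Cantor set rather than a degenerate collapse; this is exactly where product transitivity is indispensable, since, by a Furstenberg-type inductive argument, it furnishes the common hitting times needed to synchronize arbitrarily many cells at once.

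Finally, to obtain a dense Mycielski set, I would fix a countable base $\{W_j\}_{j\ge1}$ of $X$ and, by interleaving requirements into the bookkeeping, ensure that every $W_j$ eventually contains a cell, so that $\bigcup_iC_i$ is dense; being a countable union of Cantor sets, it is Mycielski. Alternatively, once one recognizes that the set of tuples which are jointly proximal and rigid is a dense $G_\delta$ in every finite power $X^m$ (again via the synchronization statement), the dense Mycielski set can be produced in one stroke by the Kuratowski--Mycielski theorem, the many-coordinate strengthening of Theorem~\ref{thm:Mycielski-cor}; verifying that this relation is residual in all powers $X^m$ is then the real crux, and it rests entirely on the transitivity of $(X\times Y,f\times f)$.
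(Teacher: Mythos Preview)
The paper does not prove this theorem at all: it is quoted verbatim from \cite{Shao} (Akin--Glasner--Huang--Shao--Ye) and stated without proof, so there is no ``paper's own proof'' to compare against. Your sketch is a reasonable outline of how the result is actually established in that reference---in particular the Kuratowski--Mycielski route via showing that the relevant proximal/rigid relation is residual in every finite power $X^m$ is exactly the approach taken there---but for the purposes of this paper no argument is required or given.
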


\begin{cor}\cite{Shao}\label{shao2}
Let $(X,f)$ be a transitive system, where $X$ is a compact metric space without isolated points.
If the system $(X,f)$ satisfies one of the following conditions:\\
\indent (1) $(X,f)$ is transitive and has a fixed point; \\
\indent (2) $(X,f)$ is totally transitive with a periodic point; \\
\indent (3) $(X,f)$ is scattering; \\
\indent (4) $(X,f)$ is weakly scattering with an equicontinuous minimal subsystem; \\
\indent (4) $(X,f)$ is weakly mixing. \\
Then $(X,f)$ has a dense Mycielski uniformly chaotic set.
Moreover, if $(X,f)$ is transitive and has a periodic point of order $d$,
then there is a closed $f^d$-invariant subset $X_0\subset X$
such that $(X_0, f^d)$ has a dense uniformly chaotic set and $X=\bigcup_{j=0}^{d-1} f^j X_0$;
in particular, $(X,f)$ has a uniformly chaotic set.
\end{cor}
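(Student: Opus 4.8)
The plan is to reduce the entire first assertion to Theorem \ref{Shao} by exhibiting, in each of the five cases, a subsystem $(Y,f)$ of $(X,f)$ for which $(X\times Y,f\times f)$ is transitive; once such a $Y$ is produced, Theorem \ref{Shao} immediately yields a dense Mycielski uniformly chaotic set. So the work is entirely in choosing the correct $Y$ and checking transitivity of the product.

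Four of the cases are essentially definitional. If $(X,f)$ is weakly mixing (condition (5)), I would take $Y=X$, so that $(X\times Y,f\times f)=(X\times X,f\times f)$ is transitive by definition of weak mixing. If $(X,f)$ is scattering (condition (3)), I would choose any minimal subsystem $(Y,f)$ — one exists by compactness and Zorn's lemma — and recall that scattering means exactly that $(X\times Y,f\times f)$ is transitive for every minimal $Y$. If $(X,f)$ is weakly scattering and has an equicontinuous minimal subsystem $(Y,f)$ (condition (4)), weak scattering gives transitivity of $(X\times Y,f\times f)$ directly. If $(X,f)$ has a fixed point $p$ (condition (1)), I would take $Y=\{p\}$: this is closed and invariant, $X\times\{p\}$ is $(f\times f)$-invariant, and the map $(x,p)\mapsto x$ conjugates $(X\times\{p\},f\times f)$ with the transitive system $(X,f)$, so the product is transitive.

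The only case demanding genuine computation is (2), total transitivity together with a periodic point $p$ of period $d$. Here I would take $Y$ to be the periodic orbit $O=\{p,f(p),\dots,f^{d-1}(p)\}$, a finite (hence closed) invariant set, and verify transitivity of $(X\times O,f\times f)$ by hand. Basic nonempty open subsets of $X\times O$ have the form $U\times\{f^i(p)\}$ with $U\subseteq X$ open; given $U\times\{f^i(p)\}$ and $V\times\{f^j(p)\}$, I must find $n$ with $f^n(U)\cap V\neq\emptyset$ and $f^{n+i}(p)=f^j(p)$, the latter forcing $n\equiv j-i\pmod d$. Writing $n=r+md$ with $r=(j-i)\bmod d$ and setting $W=f^{-r}(V)$, which is nonempty open because a transitive map on a perfect compact space is surjective, it then suffices to find $m$ with $f^{md}(U)\cap W\neq\emptyset$; but this is precisely transitivity of $f^d$, which total transitivity supplies. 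Theorem \ref{Shao} then applies with $Y=O$.

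For the \emph{Moreover} statement only plain transitivity is assumed, so $f^d$ need not be transitive on all of $X$; the remedy is the classical cyclic decomposition of a transitive system with a periodic point of order $d$. This produces closed sets $X_0,\dots,X_{d-1}$ with $X=\bigcup_{j=0}^{d-1}f^j(X_0)$, $f(X_j)=X_{j+1\bmod d}$, and $(X_0,f^d)$ transitive, and one may arrange $p\in X_0$ so that $p$ is a fixed point of $f^d|_{X_0}$. Since $X$ is perfect it is infinite, hence $X_0$ is infinite, and an infinite transitive system has no isolated points; thus $(X_0,f^d)$ is a transitive system on a perfect compact space with a fixed point, i.e.\ it falls under condition (1). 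Applying the already-proved first part to $(X_0,f^d)$ yields a dense uniformly chaotic set for $f^d$, and since every uniform-proximality and uniform-rigidity time for $f^d$ is in particular such a time for $f$, the same set is uniformly chaotic for $(X,f)$. The main obstacle is therefore localized in two places: the explicit transitivity computation of case (2), and correctly invoking the cyclic decomposition while checking that $X_0$ inherits the no-isolated-points hypothesis required to re-enter case (1).
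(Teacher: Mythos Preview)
Your argument is correct and is exactly the standard route: in each of the five cases you exhibit a subsystem $Y$ for which $(X\times Y,f\times f)$ is transitive and then invoke Theorem~\ref{Shao}, and for the ``Moreover'' clause you pass to the cyclic decomposition and re-enter case~(1) for $(X_0,f^d)$. One small point worth tightening in the last step: the classical decomposition of a transitive system has $m$ pieces where $m$ is the \emph{period of the system}, not the period $d$ of the given point; you should note that $m\mid d$, take $X_0$ from the $m$-decomposition, and observe that $(X_0,f^m)$ totally transitive implies $(X_0,f^d)$ transitive, while $X=\bigcup_{j=0}^{m-1}f^jX_0=\bigcup_{j=0}^{d-1}f^jX_0$ since the union simply repeats.

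That said, there is nothing in the paper to compare your proof against: Corollary~\ref{shao2} is stated with the citation \cite{Shao} and no proof is given here; the authors are simply quoting the result of Akin--Glasner--Huang--Shao--Ye in order to feed it into Theorem~\ref{uniform}. Your reductions coincide with the ones in that reference.
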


\begin{cor}
Let $(X,f)$ be a TDS. If the system $(X,f)$ satisfies
the condition of Theorem \ref{Shao} or Corollary \ref{shao2}, then $(X,f)$ is distributional chaotic
in a sequence.
\end{cor}

Question: is Li-Yorke chaos equivalent to distributional chaos?

\medskip
\indent {\bf Acknowledge: } The authors would like to thank Prof. Jie L\"u and referees for the
careful reading and helpful suggestions.

\end{document}